\newtheorem{theorem}{Teorema}[section]
\newtheorem{proposition}[theorem]{Proposición}
\newtheorem{corollary}[theorem]{Corolario}
\theoremstyle{definition}
\newtheorem{definition}[theorem]{Definición}
\theoremstyle{remark}
\newtheorem{remark}[theorem]{Obervación}
\theoremstyle{remark}
\newtheorem{example}[theorem]{Ejemplo}
\newcommand{\field}{F}
\newcommand{\dg}{\text{DG}}
\newcommand{\dga}{\text{DGA}}
\newcommand{\dgacat}{\text{DGA-Mod}}
\begin{document}
\title{Operads libres sobre módulos diferenciales graduados}
\author{Jesús Sánchez Guevara}
\date{}
\maketitle
	\begin{abstract}
	En este artículo se aborda una construcción del funtor de operads libres,
	tanto para el caso simétrico, como para el caso no simétrico.
	Para hacer esto, los operads son interpretados como monoides  
	sobre la categoría de módulos diferenciales graduados.
	Además, se muestran algunas propiedades entre los 
	funtores asociados a este tipo de construcción 
	que ayudan a la	comprensión de sus mecanismos. 	
	\end{abstract}
\section{Introducción}

La noción de operad aparece en los años sesenta cuando James Dillon Stasheff 
usa $A_\infty$-álgebras para estudiar estructuras algebraicas que tienen propiedades asociativas no estrictas
(\cite{10.2307/1993608} y \cite{10.2307/1993609}), es decir, que se cumplen bajo equivalencias de homotopía.
Luego, con el fin de describir la estructura homotópica subyacente a los espacios de lazos,
Peter May introduce en los inicios de los años setenta el concepto de operad (\cite{may2007}),
el cual, entre otras cosas, engloba las álgebras estudiadas por Stasheff.
En su trabajo, May demuestra que, si un espacio tiene una estructura de álgebra sobre ciertos tipos de operads,
entonces este espacio es homotópicamente equivalente a un espacio de lazos, iterado finita o infinitamente.

Intuitivamente, el uso de un operad puede verse como una generalización de la relación entre un grupo y sus
representaciones, pero con la diferencia de que la estructura algebraica a describir puede que tenga muchas
operaciones de diferentes aridades y que estas operaciones no necesariamente cumplan con condiciones de
conmutatividad o asociatividad. Así, un operad $\mathcal{P}$ está conformado por diferentes tipos de objetos,
cuya naturaleza depende de la categoría en la que se esté trabajando. Estos objetos son los que organizan
las operaciones abstractas que luego serán realizadas como las operaciones que queremos describir.
Por ejemplo, $P(m)$ representaría las operaciones de nuestra entidad que tienen $m$ entradas y una salida.
Las condiciones que se le piden a estas familias de operaciones es que cuenten con una
composición asociativa y con un elemento unidad con respecto a esta operación. En el caso de los operads simétricos, 
también se necesitan condiciones de equivarianza con respecto a las acciones de los grupos simétricos.

Para efectos de este artículo, la teoría de operads será estudiada en el marco de las categorías de
módulos diferenciales graduados y se utilizará la definición clásica de operads formulada por May.
Luego, en la sección 4 se hará una descripción detallada de la construcción de operads libres simétricos y no simétricos,
la cual se puede interpretar como una generalización de las estructuras libres usuales asociadas a grupos, anillos,
espacios vectoriales, etc. 

Para realizar la construcción de operads libres se recurre a un punto de vista categórico, donde los operads
son descritos como cierto tipo de monoides. Esto nos lleva a tener una construcción libre más cercana a la
intuición, pues con los productos tensoriales asociados, las interpretaciones de los objetos como operaciones 
de varias entradas son más claras.
En la última parte se estudian las principales adjunciones entre los funtores libres de operads libres simétricos
y no simétricos, con la idea de describir el caso simétrico a través del caso no simétrico. 

El presente trabajo está basado en el segundo capítulo de mi tesis doctoral (\cite{sanchez-jesus-thesis-2016}), 
donde esta manera de abordar la construcción de operads libres lleva al diseño de operads del tipo
$E_\infty$, los cuales son usados para estudiar las propiedades homotópicas de estructuras asociadas
a complejos de cadenas, como las descritas por Alain Prouté en \cite{alain-transf-EM-1983} y \cite{alain-transf-AW-1984}.

\section{Preliminares}
		
	Se utilizarán libremente el lenguaje y las propiedades de los módulos diferenciales graduados
	que aparecen en $\cite{loday2012algebraic}$ y $\cite{sanchez-jesus-thesis-2016}$.
	Con respecto a la teoría de categorías, las terminologías y propiedades son tomadas 
	de $\cite{MacLane-1998}$ y $\cite{alain-logique-cat}$.	

	Usaremos $F$ para denotar un cuerpo, el cual puede ser 
	$\mathbb{Z}/p\mathbb{Z}$, donde $p$ es un número primo, o incluso $\mathbb{Q}$.
	Los módulos sobre $F$ serán llamados simplemente módulos.
	Se denota $[n]$ al conjunto $\{1,\ldots,n\}$, donde $n$ es un entero positivo.
	El grupo simétrico formado por las permutaciones de $[n]$ se escribe $\Sigma_n$.

	Un módulo $M$ se dirá módulo graduado si existe una familia $\{M_i\}_{i\in \mathbb{Z}}$
	de submódulos de $M$, tales que $M=\bigoplus_{i\in \mathbb{Z}}M_i$.	
	Un módulo diferencial graduado o $\dg$-módulo 
	es un módulo graduado $M$ junto con un morfismo $\partial:M\to M$
	de grado $-1$, tal que $\partial \circ \partial=0$.	
	Una aumentación de un $\dg$-módulo es un morfismo de $\dg$-módulos 
	$\epsilon:M\to \field$ de grado $0$. Similarmente, una coaumentación de $M$
	es un morfismo de $\dg$-módulos $\eta:\field\to M$ de grado $0$.
	Si un $\dg$-módulo $M$ tiene una aumentación $\epsilon$ y una coaumentación $\eta$,
	tal que $\epsilon \circ \eta=1_\field$, entonces $M$ se dice módulo diferencial
	graduado con aumentación o $\dga$-módulo. Se denota la categoría de $\dga$-módulos 
	como $\dgacat$. 

\section{Operads simétricos}

	Los operads simétricos u operads, son colecciones de objetos que tienen asociados a sus componentes
	acciones de los grupos simétricos $\Sigma_n$.
	Debido a que se trabajará en un contexto graduado, 
	los signos de los diagramas incluidos están determinados por la convención de Koszul y
	son omitidos para facilitar la lectura. Recuerde que la convención de Koszul dice que, 
	si la posición de dos símbolos en una expresión, 
	de grados $p$ y $q$, es permutada,
	la expresión resultante será multiplicada por $(-1)^{pq}$.

\begin{definition}
\label{df-operad}
\index{operad@Operad}
	Un operad $\mathcal{P}$ es una colección de $\dga$-módulos $\{\mathcal{P}(n)\}_{n\geq 0}$
	junto con:
	\begin{enumerate}
	\item Un morfismo $\eta:\field\to \mathcal{P}(1)$, llamado la unidad de $\mathcal{P}$. 
	\item Para cada $n$, una acción a la derecha por el grupo simétrico $\Sigma_n$ sobre $\mathcal{P}(n)$,
	es decir, un morfismo de $\dga$-módulos que hace de $P(n)$ un DGA-$\field[\Sigma_n]$-módulo derecho.
		\begin{equation}
			\begin{gathered}
			P(n)\otimes \field[\Sigma_n] \longrightarrow P(n)
			\end{gathered}
		\end{equation}
	\item Por cada tupla $(h,i_1,\ldots,i_h)$, un morfismo de $\dga$-módulos,
		\begin{equation}
			\begin{gathered}
			\gamma_{(h,i_1,\ldots,i_h)}: \mathcal{P}(h)\otimes \mathcal{P}(i_1)\otimes 
			\cdots \otimes \mathcal{P}(i_h)\to \mathcal{P}(n)
			\end{gathered}
		\end{equation}
	donde $n=i_1+\cdots +i_h$ y $n,h,i_j\geq 0$. Todo morfismo de este tipo se escribirá $\gamma$.
	\end{enumerate}
	Estos morfismos deben de cumplir las siguientes condiciones.
	\begin{enumerate}
	\item Los morfismos $\gamma$ son asociativos, en el sentido del siguiente diagrama conmutativo.
		\begin{equation}
		\label{dg-associ-operad}
		\begin{gathered}
		\xymatrix{
		P(h)\otimes \left[ \bigotimes_{p=1}^{h}P(i_p) \right]\otimes \left[\bigotimes_{p=1}^{h}\bigotimes_{q=1}^{i_p}P(r_{p,q})
		\ar[r]^-{\gamma \otimes 1}\right] \ar[dd]_-{\text{intercambio}} 
		& P(n)\otimes \left[ \bigotimes_{p=1}^{h}\bigotimes_{q=1}^{i_s}P(r_{p,q})\right]
		\ar[d]^-{\gamma}\\
		& P(r)\\
		P(h)\otimes \bigotimes_{p=1}^{h} \left[P(i_p)\otimes \bigotimes_{q=1}^{i_p}P(r_{p,q})\right]
		\ar[r]_-{1\otimes \gamma^{\otimes h}} 
		& P(h)\otimes \bigotimes_{p=1}^{h}P(r_p)  \ar[u]_-{\gamma}
		}
		\end{gathered}
		\end{equation}
	Donde $n=\sum_{p=1}^{h}i_p$, $r=\sum_{p=1}^{h}\sum_{q=1}^{i_p}r_{p,q}=\sum_{p=1}^{h}r_p$
	y la flecha vertical izquierda es solamente un intercambio de factores.
	\item La unidad $\eta:\field  \to P(1)$ hace conmutativos los siguientes diagramas.
		\begin{equation}
		\label{dg-unit-operad}
		\begin{gathered}
		\xymatrix@R=25pt{
		\mathcal{P}(n)\otimes \field^{\otimes n} \ar[r]^-{\cong} \ar[d]_-{1\otimes \eta^{\otimes n}}& \mathcal{P}(n)\\
		\mathcal{P}(n)\otimes \mathcal{P}(1)^{\otimes n} \ar[ru]_-{\gamma} &
		}
		\xymatrix{
		\field\otimes \mathcal{P}(n) \ar[r]^-{\cong} \ar[d]_-{\eta \otimes 1}& \mathcal{P}(n)\\
		\mathcal{P}(1)\otimes \mathcal{P}(n)\ar[ru]_-{\gamma} &
		}
		\end{gathered}
		\end{equation}
	\item La acción de los grupos simétricos deben satisfacer condiciones de equivarianza,
	expresadas por los siguientes diagramas conmutativos.
		\begin{equation}
		\label{dg-equiv-1-op}
		\begin{gathered}
		\xymatrix@C=5pc{
		\mathcal{P}(h)\otimes\mathcal{P}(i_1)\otimes \cdots \otimes\mathcal{P}(i_h)
		\ar[d]_-{\sigma \otimes \sigma^{-1}} \ar[r]^-{\gamma}
		& P(n)\ar[d]^-{\sigma(i_{1},\ldots,i_{n})}\\
		\mathcal{P}(h)\otimes\mathcal{P}(i_{\sigma(1)})\otimes \cdots \otimes \mathcal{P}(i_{\sigma(h)})
		\ar[r]^-{\gamma} & P(n)\\
		}
		\end{gathered}
		\end{equation}
	Donde $n=i_1+\cdots +i_h$ y la flecha $\sigma\otimes \sigma^{-1}$ consiste en la acción derecha de
	$\sigma$ sobre $P(h)$ y la acción izquierda de $\sigma^{-1}$ sobre el producto tensorial
	$P(i_1)\otimes \cdots \otimes P(i_h)$.
		\begin{equation}\label{dg-equiv-2-op}
		\begin{gathered}
		\xymatrix@C=5pc{
		\mathcal{P}(h)\otimes \mathcal{P}(i_1)\otimes \cdots \otimes\mathcal{P}(i_h)
		\ar[r]^-{1\otimes \tau_1\otimes \cdots \otimes \tau_h} \ar[d]_-{\gamma} & 
		\mathcal{P}(h)\otimes\mathcal{P}(i_1)\otimes \cdots \otimes\mathcal{P}(i_h)
		\ar[d]^-{\gamma}\\
		\mathcal{P}(n)\ar[r]^-{\tau_1\oplus \cdots \oplus \tau_n} & \mathcal{P}(n)
		}
		\end{gathered}
		\end{equation}
	Donde $n=i_1+\cdots +i_h$ y la acción $1\otimes \tau_1\otimes \cdots \otimes \tau_h$ 
	es la identidad de $P(h)$ sobre el primer factor y la acción derecha de $\tau_j$ en el factor $P(i_j)$.
	\end{enumerate}
\end{definition}

	Un operad fundamental que se puede ver como el paradigma para diseñar el concepto de
	operad, es el operad de endomorfismos.
\begin{definition}
\label{df-endo-operad}
\index{endomorphismoperad@Endomorphism Operad}
	Para cada $M\in \dgacat$, el operad $End(M)$ de endomorfismos de $M$ se define como:
	\begin{enumerate}
	\item Para todo $n\geq 0$, $End(M)(n)=Hom(M^{\otimes n},M)$, es decir el $\dga$-módulo de
	aplicaciones homogéneas de  $M^{\otimes n}$ a $M$.
	\item La unidad $\eta:\field \to End(M)(1)$ es la identidad de $M$. 
	\item La acción derecha de $\Sigma_n$ sobre $End(M)$ es inducida por la acción izquierda de $\Sigma_n$ 
	sobre $M^{\otimes n}$, es decir,
	$f\sigma(x_1\otimes \cdots \otimes x_n)=
	\|\sigma\|f(x_{\sigma^{-1}(1)}\otimes \cdots \otimes x_{\sigma^{-1}(n)})$. Donde $\|\sigma\|$
	es el signo de la permutación $\sigma$.
	\item La composición de aplicaciones,
		\begin{equation}
		\begin{gathered}
		\gamma:End(M)(h)\otimes End(M)(i_1)\otimes \cdots \otimes End(M)(i_h)\to End(M)(n)
		\end{gathered}
		\end{equation}
		donde $n=i_1+\cdots +i_h$, está dada por,
		\begin{equation}
		\begin{gathered}
		\gamma (f_h\otimes f_{i_1}\otimes \cdots \otimes f_{i_h})=f_h \circ(f_{i_1}\otimes \cdots \otimes f_{i_h})
		\end{gathered}
		\end{equation}
	\end{enumerate}
\end{definition}

El lector puede verificar que $End(M)$ satisface
las condiciones de la definición \ref{df-operad}.
\begin{definition}
\label{df-operads-morphism}
\index{operadmorphism@Operad morphism}
\label{df-category-operads}
\index{categoryoperads@Category of operads}
	Sean $P$ y $Q$ dos operads. 
	Un morfismo $f$ de $P$ a $Q$, es una colección de morfismos de $\dga$-módulos,
		\begin{equation}
		\begin{gathered}
		f_n:P(n)\to Q(n)
		\end{gathered}
		\end{equation}
	que satisfacen las siguientes condiciones.
	\begin{enumerate}
	\item El morfismo $f_1:P(1)\to Q(1)$ preserva la unidad de los operads, es decir, $f_1\eta=\eta$.
		\begin{equation}
		\begin{gathered}
		\xymatrix{
		P(1)\ar[rr]^-{f_1} && Q(1)\\
		&\field \ar[ul]^-{\eta} \ar[ur]_-{\eta}& 
		}
		\end{gathered}
		\end{equation}
	\item Los morfismos $f_n:P(n)\to Q(n)$ son $\Sigma_n$-equivariantes, es decir, el siguiente
	diagrama es conmutativo para cada $\sigma \in \Sigma_n$.
		\begin{equation}
		\begin{gathered}
		\xymatrix{
		P(n) \ar[r]^-{f_n} \ar[d]_-{\sigma}& Q(n)\ar[d]^-{\sigma}\\
		P(n) \ar[r]^-{f_n} & Q(n)\\
		}
		\end{gathered}
		\end{equation}
	\item $f$ preserva las operaciones de composición de los operads, 
	esto es, que el siguiente diagrama es conmutativo.
		\begin{equation}
		\begin{gathered}
		\xymatrix@C=3cm@R=1cm{
		P(h)\otimes P(i_1)\otimes \cdots \otimes P(i_h) \ar[r]^-{\gamma_P} 
		\ar[d]_-{f_h\otimes f_{i_1}\otimes \cdots \otimes f_{i_h}}
		& P(n) \ar[d]^-{f_n}\\
		Q(h)\otimes Q(i_1)\otimes \cdots \otimes Q(i_h) \ar[r]^-{\gamma_Q} 
		& Q(n) \\
		}
		\end{gathered}
		\end{equation}
	\end{enumerate}
La categoría de operads sobre $\dgacat$ se denota $\mathcal{OP}$.
\end{definition}


\begin{example}
\label{N-operad}
\index{operadn@Operad $\mathcal{N}$}
	El operad $\mathcal{N}$ está dado por $\mathcal{N}(n)=\field$ para cada $n$ no negativo,
	donde $\field$ es visto como un $\dga$-módulo concentrado en grado cero.  
	La unidad $\eta$ es la identidad de $\field$, $\Sigma_n$ actúa de manera trivial en
	cada componente y las composiciones $\gamma$, si denotamos $a_i$ al generador de grado cero de $N(i)$,
	están dadas por la regla,
		\begin{equation}
		\begin{gathered}
		\gamma:a_h\otimes a_{i_1}\otimes \cdots \otimes a_{i_h}\to a_{n}
		\end{gathered}
		\end{equation}
		donde $n=i_1+\cdots +i_h$. 
\end{example}

\begin{example}
\label{M-operad}
\index{operadm@Operad $\mathcal{M}$}
	Al hacer libre la acción de los grupos simétricos en el ejemplo anterior, se produce un operad 
	que denotamos $\mathcal{M}$. Las componentes de $\mathcal{M}$ son los
	módulos concentrados en grado cero $M(n)=\field[\Sigma_n]$,
	para cada $n$ no negativo. 
	Las composiciones $\gamma$ son determinadas, al igual que antes,
	por los generadores de grado cero, pero respetando las acciones de los grupos simétricos:
		\begin{equation}
		\begin{gathered}
		\gamma(a_h\otimes a_{i_1}\sigma_{i_1} \otimes \cdots \otimes  a_{i_h}\sigma_{i_1})= 
		a_{n}(\sigma_{i_1}\oplus \cdots \oplus \sigma_{i_h}) 
		\end{gathered}
		\end{equation}
	y
		\begin{equation}
		\begin{gathered}
		\gamma(a_h \sigma \otimes a_{i_{\sigma^{-1}(1)}}\otimes \cdots \otimes a_{i_{\sigma^{-1}(h)}})= 
		a_{n} \sigma(i_1,\ldots, i_h)
		\end{gathered}
		\end{equation}
	donde $n=i_1+\cdots +i_h$. 
\end{example}
\section{Operads libres}
	Al dejar de lado la composición en un operad $\mathcal{P}$, 
	lo que resta es una colección $\{ P(n)\}_{n\geq 0}$ de $\dga$-módulos
	con acciones a la derecha por los grupos simétricos respectivos.
	Las colecciones de este tipo las llamaremos $\mathbb{S}$-módulos.	
	En esta sección veremos una manera de generar a partir de un $\mathbb{S}$-módulo,
	una estructura de operad que lo contenga.
	Los operads obtenidos de esta forma
	son llamados operads libres y van a satisfacer una propiedad universal: 	
	cada morfismo de $\mathbb{S}$-módulos, entre la colección de partida
	a cualquier otra colección con una estructura de operad,
	puede ser extendido de manera única a un morfismo de operads desde el operad libre.

	La construcción de operads libres es largamente descrita 
	en las principales referencias del tema (por ejemplo 
	\cite{loday2012algebraic}, \cite{markl2007operads}, \cite{2005math......2155B} y \cite{rezkthesis1996}).
	Aunque hay muchas formas de definir operads libres,
	en esta presentación se tratará de mantener los operads tan cerca como se pueda de la definición clásica
	(vea \ref{df-operad}).
	Este punto de vista se puede usar, entre otras cosas, 
	para facilitar el diseño de algunos tipos de $E_\infty$-coalgebras, 
	las cuales son coalgebras sobre un operad simétrico del tipo $E_\infty$
	(ver \cite{sanchez-jesus-thesis-2016}).

	La construcción de un operad libre a partir de un $\mathbb{S}$-módulo se basa en interpretar los operads  
	como monoides sobre
	la categoría de $\mathbb{S}$-módulos, donde el producto
	monoidal se busca que codifique la composición generalizada de operaciones abstractas.
	Esta manera de definir operads es una instancia de un punto de vista mucho más general, donde los operads,
	son definidos como monads sobre la categoría de endofunctores
	de la categoría de $\dga$-módulos y donde los $\mathbb{S}$-módulos son identificados
	con funtores de Schur (ver \cite{loday2012algebraic}). 
	Aquí vamos a mantener a los $\mathbb{S}$-módulos en su estado natural
	y la composición de funtores aparecerá como una operación especial de $\mathbb{S}$-módulos.
\begin{definition}
\label{df-S-groupoid}
\index{groupoidS@Groupoid $\mathbb{S}$}
	Sea $\mathbb{S}$ el grupoide donde los objetos
	son los conjuntos ordenados $[n]=\{1,\ldots,n\}$, donde $n$ es un entero positivo
	y $[0]=\emptyset$. Los morfismos de $\mathbb{S}$ 
	están dados por $\mathbb{S}\mathbb(n,m)=\emptyset$,
	si $n\neq m$, y $\mathbb{S}(n,n)=\Sigma_n$, el $n$ grupo simétrico.
\end{definition}

\begin{definition}
\label{df-S-module}
\index{smodule@$\mathbb{S}$-module}
\index{categorysmod@Category $\mathbb{S}$-Mod}
	Un $\mathbb{S}$-módulo $M$ es un funtor contravariante de la 
	categoría $\mathbb{S}$ a la categoría $\dga$-Mod. 
	Los morfismos $\mathbb{S}(n,n)$ son interpretados
	como acciones una acción a la derecha de $\Sigma_n$ sobre $M(n)$.
	La categoría de $\mathbb{S}$-módulos y transformaciones naturales
	se denota $\mathbb{S}$-Mod.
\end{definition}
Observe que la categoría $\mathbb{S}$-Mod tiene todos los colímites y límites debido a que
es una categoría de diagramas sobre $\dga$-Mod.
\begin{definition}
\label{df-forget-functor-seq}
\index{forgetfulfunctoruopsmod@Forgetful functor $U:\mathcal{OP}\to \mathbb{S}$-Mod}
	Se denota $U$ el funtor de olvido de la categoría de operads a la categoría $\mathbb{S}$-Mod.
\end{definition}

	Antes de la formalidad de la construcción del funtor que define los operads libres sobre $\mathbb{S}$-Mod,
	veamos en detalle como luce un operad libre. 
	Sea $M$ un $\mathbb{S}$-módulo,
	si para cada entero $n\geq 1$, su componente $M(n)$ se piensa como constituido
	por aplicaciones de $n$ entradas y una salida, entonces
	el operad libre $F(M)$ asociado a $M$ se puede entender como 
	todas las posibles aplicaciones que se obtienen al componer formalmente los elementos de $M$.
	Es claro que, como $\mathbb{S}$-módulo, $F(M)$ deberá contener a $M$ como un $\mathbb{S}$-submódulo. 

	Así, la componente $F(M)(n)$ contendrá a $M(n)$ y además, 
	deberá contener a todos los posibles productos tensoriales del tipo 
	$M(h)\otimes M(i_1)\otimes \cdots \otimes M(i_h)$,
	donde $i_1+\cdots +i_h=n$, 
	ya que ellos representan las composiciones formales que dan como resultado operaciones de aridad $n$.

	Para satisfacer el axioma de equivarianza \ref{dg-equiv-1-op} 
	necesitamos, para cada $\sigma \in \Sigma_h$, la relación,
		\begin{equation}
		\begin{gathered}
		M(h)\sigma \otimes M(i_1)\otimes \cdots \otimes M(i_h)
		= M(h)\otimes M(i_{\sigma(1)})\otimes \cdots \otimes M(i_{\sigma(h)}),
		\end{gathered}
		\end{equation}
	la cual se obtiene tomando el producto tensorial sobre $\field[\Sigma_h]$,
		\begin{equation}\label{fm-equi-ope-sigmak1}
		\begin{gathered}
		M(h)\otimes_{\Sigma_h} M(i_1)\otimes \cdots \otimes M(i_h).
		\end{gathered}
		\end{equation}
	Ahora, considere el segundo axioma de equivarianza $\ref{dg-equiv-2-op}$. 
	En la parte derecha de \ref{fm-equi-ope-sigmak1}, $M(i_1)\otimes \cdots \otimes M(i_h)$,
	podríamos tener acciones sobre cada factor por elementos del respectivo
	grupo simétrico,
		\begin{equation}
		\begin{gathered}
		M(i_1)\tau_{1}\otimes \cdots \otimes M(i_h)\tau_{h} \text{, donde $\tau_j\in \Sigma_{i_j}$}
		\end{gathered}
		\end{equation}
	La acción simultánea de las permutaciones $\tau_j$ puede verse como una acción de la permutación de $\Sigma_n$
	dada por $\tau_1\oplus \cdots \oplus \tau_k$, actuando a la derecha de $M(i_1)\otimes \cdots \otimes M(i_h)$. 
	Las permutaciones de este tipo forman el subgrupo $\Sigma_{i_1}\times \cdots \times \Sigma_{i_h}$ de $\Sigma_n$.
	Entonces ponemos escribir,
		\begin{equation}
		\begin{gathered}
		M(i_1)\tau_{1}\otimes \cdots \otimes M(i_h)\tau_{h}=
		\left( M(i_1)\otimes \cdots \otimes M(i_h) \right) (\tau_1 \oplus \cdots \oplus \tau_h)
		\end{gathered}
		\end{equation}
	El proceso de colocar a la derecha las permutaciones $\tau_j$ es entonces expresado por el producto tensorial,
		\begin{equation}
		\begin{gathered}
		\left( M(i_1)\otimes \cdots \otimes M(i_h) \right)\otimes_{\Sigma_{i_1}\times \cdots \times \Sigma_{i_h}} 			\field[\Sigma_n]
		\end{gathered}
		\end{equation}
	En esta expresión se coloca $\field[\Sigma_n]$ en lugar de $\field[\Sigma_{i_1}\times \cdots \times \Sigma_{i_h}]$,
	para así considerar todas las otras permutaciones de $\Sigma_n$ que
	actúan sobre $i_1+\cdots+i_h$ entradas pero no pueden ser expresadas por
	una suma del tipo $\tau_1 \oplus \cdots \oplus \tau_h$. Además, esto se simplifica escribiendo,
		\begin{equation}\label{fm-tdlsf}
		\begin{gathered}
		\left( M(i_1)\otimes \cdots \otimes M(i_h) \right)\otimes 
		\field[\Sigma_n /(\Sigma_{i_1}\times \cdots \times \Sigma_{i_h})]
		\end{gathered}
		\end{equation}
	El cociente $\Sigma_n /(\Sigma_{i_1}\times \cdots \times \Sigma_{i_h})$
	es el grupo de los $(i_1,\ldots,i_h)$ intercambios de $\Sigma_n$,
	el cual se denota $Sh(i_1,\ldots,i_h)$.  Recuerde que un $(i_1,\ldots,i_h)$ intercambio,
	donde $i_1+\dots+i_h=n$, es un elemento de $\Sigma_n$ que envía
	$(1,\ldots,n)$ a $(\mu^1_{1},\ldots,\mu^1_{i_1},\ldots,\mu^h_{1},\ldots,\mu^h_{i_h})$
	tal que $\mu^j_{1}<\ldots<\mu^j_{i_j}$ para todo $1\leq j\leq h$.
	Así, \ref{fm-tdlsf} se escribe,
		\begin{equation}\label{fm-equi-ope-sigmak}
		\begin{gathered}
		M(i_1)\otimes \cdots \otimes M(i_h)\otimes \field[Sh(i_1,\ldots,i_h)]
		\end{gathered}
		\end{equation}
	El cual junto con la parte $M(h)$ nos da la siguiente expresión.
		\begin{equation}\label{fm-equi-operads}
		\begin{gathered}
		M(h)\otimes_{\Sigma_h}  M(i_1)\otimes \cdots \otimes M(i_h)\otimes \field[Sh(i_1,\ldots,i_h)] 
		\end{gathered}
		\end{equation}
	Nuestro operads libre necesitará esta construcción para cualquier $n$ y todas las posibles sumas 
	$i_1\cdots+i_h=n$, es decir, necesitamos considerar la suma directa,
		\begin{equation}\label{fm-MM}
		\begin{gathered}
		\bigoplus_{n\geq 0} \bigoplus_{h\geq 0} M(h)\otimes_{\Sigma_h} \left( \bigoplus_{i_1+\cdots+i_h=n}
		M(i_1)\otimes \cdots \otimes M(i_h)\otimes \field[Sh(i_1,\ldots,i_h)] \right)
		\end{gathered}
		\end{equation}
	Esta expresión representa la primera etapa de todas las posibles composiciones formales
	entre los elementos de $M$ cuando son interpretados como aplicaciones.
	En la siguiente etapa de composiciones se deben de considerar
	cuando cada $M(i_j)$ en \ref{fm-MM} viene
	de otra composición arbitraria y así sucesivamente, una cantidad finita de pasos.
	Para poder manejar todas los posibles niveles de composiciones
	necesitamos introducir algunas operaciones sobre los $\mathbb{S}$-módulos.

\begin{definition}
\label{df-tensor-product-symme-seq}
\index{tensorproductsmod@Tensor product of $\mathbb{S}$-modules}
	Sean $M$ y $N$ $\mathbb{S}$-módulos.
	Se define el producto tensorial de $M$ y $N$ como el $\mathbb{S}$-módulo $M\otimes N$ dado por la fórmula,
		\begin{equation}
		\begin{gathered}
		(M\otimes N)(n)=\bigoplus_{i+j=n} M(i)\otimes M(j)\otimes \field[Sh(i,j)]
		\end{gathered}
		\end{equation}
\end{definition}

\begin{proposition}
\label{prop-s-mod-properties}
	El producto tensorial de $\mathbb{S}$-módulos es asociativo y
	para cada $\mathbb{S}$-módulo $M$ satisface $M=M\otimes \field =\field \otimes M$,
	donde  $\field$ se ve como un $\mathbb{S}$-módulo concentrado en aridad $0$.
\end{proposition}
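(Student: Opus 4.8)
El plan es establecer primero las identidades de unidad, que son inmediatas, y concentrar el trabajo en la asociatividad, cuyo contenido esencial es una biyección combinatoria entre conjuntos de barajas. Para la unidad, uso que $\field$ concentrado en aridad $0$ significa $\field(0)=\field$ y $\field(j)=0$ para $j\geq 1$. Desarrollando la definición \ref{df-tensor-product-symme-seq} se tiene
\[
(M\otimes \field)(n)=\bigoplus_{i+j=n}M(i)\otimes \field(j)\otimes \field[Sh(i,j)],
\]
donde el único sumando no nulo es el de $i=n$, $j=0$, a saber $M(n)\otimes\field\otimes\field[Sh(n,0)]$. Como $Sh(n,0)=\Sigma_n/(\Sigma_n\times\Sigma_0)$ consta de un solo elemento, $\field[Sh(n,0)]=\field$ y el sumando se reduce a $M(n)$; el caso $\field\otimes M$ es análogo usando $\field(0)$ en el primer factor. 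En ambas situaciones la baraja es trivial, de modo que la acción de $\Sigma_n$ se traslada sin modificación y los isomorfismos son naturales en $M$.

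Para la asociatividad, la idea central es reconocer cada sumando $M(i)\otimes N(j)\otimes\field[Sh(i,j)]$ como el módulo inducido $\mathrm{Ind}_{\Sigma_i\times\Sigma_j}^{\Sigma_{i+j}}\bigl(M(i)\otimes N(j)\bigr)$, pues $Sh(i,j)$ es un sistema de representantes de $\Sigma_{i+j}/(\Sigma_i\times\Sigma_j)$ y la acción derecha descrita tras \ref{fm-tdlsf} es exactamente la inducida. Expandiendo la definición dos veces obtengo, para cada $n$,
\[
((M\otimes N)\otimes P)(n)=\bigoplus_{i+j+k=n}M(i)\otimes N(j)\otimes P(k)\otimes\field[Sh(i,j)]\otimes\field[Sh(i+j,k)]
\]
y, simétricamente,
\[
(M\otimes(N\otimes P))(n)=\bigoplus_{i+j+k=n}M(i)\otimes N(j)\otimes P(k)\otimes\field[Sh(j,k)]\otimes\field[Sh(i,j+k)].
\]
Como los factores $\field[Sh(\cdot)]$ están concentrados en grado $0$, reagrupar los tensores para colocar las álgebras de grupo al final no introduce signos de Koszul, y en ambos lados los factores genuinos quedan en el mismo orden $M(i)\otimes N(j)\otimes P(k)$.

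Así, todo se reduce a exhibir un isomorfismo $\field[Sh(i,j)]\otimes\field[Sh(i+j,k)]\cong\field[Sh(j,k)]\otimes\field[Sh(i,j+k)]$ compatible con las acciones residuales. Probaré que ambos miembros son canónicamente isomorfos a $\field[Sh(i,j,k)]$, el álgebra del conjunto de $(i,j,k)$-barajas: componer una $(i+j,k)$-baraja con una $(i,j)$-baraja aplicada al bloque inicial de tamaño $i+j$ da una biyección $Sh(i,j)\times Sh(i+j,k)\xrightarrow{\ \sim\ }Sh(i,j,k)$, y análogamente sobre el bloque final de tamaño $j+k$ se obtiene $Sh(j,k)\times Sh(i,j+k)\xrightarrow{\ \sim\ }Sh(i,j,k)$; en términos abstractos esto es la transitividad de la inducción a lo largo de las torres $\Sigma_i\times\Sigma_j\times\Sigma_k\leq\Sigma_{i+j}\times\Sigma_k\leq\Sigma_n$ y $\Sigma_i\times\Sigma_j\times\Sigma_k\leq\Sigma_i\times\Sigma_{j+k}\leq\Sigma_n$. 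El conteo $\binom{i+j}{i}\binom{n}{i+j}=\frac{n!}{i!\,j!\,k!}=\binom{j+k}{j}\binom{n}{i}$ confirma que las cardinalidades coinciden, pero el verdadero obstáculo —y el punto que hay que verificar con cuidado— es que estas biyecciones conmutan con la acción derecha de $\Sigma_n$ y con la de $\Sigma_i\times\Sigma_j\times\Sigma_k$, y que son naturales en $M$, $N$ y $P$. Comprobado esto, ensamblar los isomorfismos sumando a sumando y sobre todas las aridades produce el isomorfismo de $\mathbb{S}$-módulos deseado.
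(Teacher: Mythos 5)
Tu propuesta es correcta y sigue esencialmente el mismo camino que la prueba del artículo: ambas reconocen cada factor $\field[Sh(\cdot)]$ como el módulo inducido correspondiente y reducen los dos modos de asociar a la expresión intermedia $\bigoplus_{i+j+k=n} M(i)\otimes N(j)\otimes P(k)\otimes_{\Sigma_i\times \Sigma_j\times \Sigma_k}\field[\Sigma_n]$ mediante la transitividad de la inducción, que es exactamente tu biyección $Sh(i,j)\times Sh(i+j,k)\cong Sh(i,j,k)\cong Sh(j,k)\times Sh(i,j+k)$. Además desarrollas explícitamente las identidades de unidad, que el artículo deja al lector.
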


\begin{proof}
	Let $M$, $N$ and $P$ be $\mathbb{S}$ módulos. 
		\begin{align*}
		((M\otimes N)\otimes P)(n)&= \bigoplus_{i+j=n}(M\otimes N)(i)\otimes P(j)\otimes \field[Sh(i,j)]\\
		&= \bigoplus_{i+j=n}\bigoplus_{r+s=i}M(r)\otimes N(s)\otimes \field[Sh(r,s)]\otimes P(j)\otimes \field[Sh(i,j)]\\
		&=\bigoplus_{i+j=n}\bigoplus_{r+s=i} \left( M(r)\otimes N(s)\otimes_{\Sigma_r\times \Sigma_s} \field[\Sigma_i] \right)
		\otimes_{\Sigma_i\times \Sigma_j}  P(j)\otimes \field[\Sigma_n] \\
		&=\bigoplus_{r+s+j=n} M(r)\otimes N(s)\otimes P(j) \otimes_{\Sigma_r\times \Sigma_s\times \Sigma_j} \field[\Sigma_n]\\
		&=\bigoplus_{r+i=n}\bigoplus_{s+j=i}M(r) \otimes
		\left( N(s)\otimes P(j)\otimes_{\Sigma_s\times \Sigma_j} \field[\Sigma_i]\right) 
		\otimes_{\Sigma_r\times \Sigma_i}  \field[\Sigma_n]\\
		&=(M\otimes (N\otimes P))(n)
		\end{align*}
	Se deja el resto de la prueba al lector.
\end{proof}

\begin{remark}
	Note que en la fórmula \ref{fm-MM} se tiene,
		\begin{equation}\label{fm-decompo-Nk}
		\begin{gathered}
		\bigoplus_{i_1+\cdots+i_h=n} M(i_1)\otimes \cdots \otimes M(i_h)\otimes \field[Sh(i_1,\ldots,i_h)]=M^{\otimes h} 
		\end{gathered}
		\end{equation}
	donde $M^{\otimes h}$ es $h$ veces el producto tensorial de $\mathbb{S}$-módulos.
\end{remark}

\begin{definition}
\label{df-composition-symmetric-seq}
\index{compositionsmodules@Composition of $\mathbb{S}$-modules}
	Sean $M$ y $N$ $\mathbb{S}$-módulos. Se define la composición de $M$ con $N$ como el $\mathbb{S}$-módulo,
		\begin{equation}\label{fm-comp-S-mod}
		\begin{gathered}
		M\circ N= \bigoplus_{h\geq 0} M(h)\otimes_{\Sigma_h} N^{\otimes h}
		\end{gathered}
		\end{equation}
\end{definition}

\begin{remark}
	La fórmula \ref{fm-MM} puede ser escrita,
		\begin{align}
		&\bigoplus_{h\geq 0} M(h)\otimes_{\Sigma_h} \left( \bigoplus_{n\geq 0}\bigoplus_{i_1+\cdots+i_h=n}
		M(i_1)\otimes \cdots \otimes M(i_h)\otimes \field[Sh(i_1,\ldots,i_h)] \right) \nonumber\\ 
		&=\bigoplus_{h\geq 0} M(h)\otimes_{\Sigma_h} (M^{\otimes h}) =M\circ M
		\end{align}
	La composición de $M\circ M$ representa la primera etapa de composiciones formales y la expresión
	$M^{\circ h}$ puede ser usada para representar $h$ etapas de composiciones formales.
\end{remark}

\begin{proposition}
	Sean $f:M\to N$ y $f':M'\to N'$ morfismos de $\mathbb{S}$-módulos,
	entonces el morfismo dado por $(f\circ f')(x\otimes y_1\otimes \cdots \otimes y_h)=f(x)\otimes 
	f'(y_1)\otimes \cdots \otimes f'(y_h)$ es un morfismo de $\mathbb{S}$-módulos
	de $M\circ M'$ a $N\circ N'$. 
\end{proposition}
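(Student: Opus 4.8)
El plan es descomponer el morfismo sumando a sumando según el índice $h$ de la definición \ref{df-composition-symmetric-seq} y, en cada sumando, reconocer $f\circ f'$ como el producto $f(h)\otimes_{\Sigma_h}(f')^{\otimes h}$. Recordando que
\[
(M\circ M')=\bigoplus_{h\geq 0} M(h)\otimes_{\Sigma_h}(M')^{\otimes h},
\]
basta verificar tres cosas: que la potencia tensorial $(f')^{\otimes h}\colon (M')^{\otimes h}\to (N')^{\otimes h}$ es un morfismo de $\mathbb{S}$-módulos, que el producto $f(h)\otimes(f')^{\otimes h}$ desciende al cociente por $\Sigma_h$, y que la aplicación inducida es equivariante respecto de las acciones de los $\Sigma_n$ y compatible con las diferenciales.

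Primero trataría la funtorialidad del producto tensorial de $\mathbb{S}$-módulos: dados morfismos $g$ y $g'$, la fórmula $(g\otimes g')(u\otimes v\otimes s)=g(u)\otimes g'(v)\otimes s$, donde $s\in \field[Sh(i,j)]$, define un morfismo de $\mathbb{S}$-módulos, pues en cada aridad es una suma directa de productos tensoriales de morfismos de $\dga$-módulos, y el factor de barajas $\field[Sh(i,j)]$ se mantiene intacto, lo que garantiza la $\Sigma_n$-equivarianza. Iterando esta observación $h$ veces se obtiene que $(f')^{\otimes h}$ es un morfismo de $\mathbb{S}$-módulos; en particular es $\Sigma_n$-equivariante en cada aridad y, por aplicar $f'$ factor a factor, conmuta con la permutación de los $h$ factores tensoriales, es decir, es equivariante respecto de la acción izquierda de $\Sigma_h$ sobre $(M')^{\otimes h}$.

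El paso central, y el que considero el principal obstáculo, es la buena definición sobre el cociente $\otimes_{\Sigma_h}$. Aquí hay que comprobar que, para todo $\sigma\in\Sigma_h$, se cumple
\[
f(x\sigma)\otimes (f')^{\otimes h}(w)=f(x)\otimes (f')^{\otimes h}(\sigma\cdot w)
\]
en $N(h)\otimes_{\Sigma_h}(N')^{\otimes h}$. Usando que $f$ es un morfismo de $\mathbb{S}$-módulos se tiene $f(x\sigma)=f(x)\sigma$, y usando la equivarianza de $(f')^{\otimes h}$ frente a la permutación de factores se tiene $(f')^{\otimes h}(\sigma\cdot w)=\sigma\cdot (f')^{\otimes h}(w)$; ambas igualdades, junto con la relación de balanceo $f(x)\sigma\otimes z=f(x)\otimes \sigma\cdot z$ que define el producto tensorial sobre $\field[\Sigma_h]$, dan la identidad buscada. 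El cuidado está en que las dos acciones de $\Sigma_h$, la derecha sobre $M(h)$ y la izquierda por permutación de factores sobre $(M')^{\otimes h}$, son exactamente las que se balancean en $\otimes_{\Sigma_h}$, y en que los signos de Koszul introducidos al permutar factores coinciden a ambos lados, de modo que no aparecen discrepancias.

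Finalmente, la equivarianza respecto de $\Sigma_n$ y la compatibilidad con las diferenciales resultan automáticas: la acción de $\Sigma_n$ sobre $(M\circ M')(n)$ proviene enteramente de la acción sobre la parte de aridad $n$ de $(M')^{\otimes h}$, y $(f')^{\otimes h}$ ya es $\Sigma_n$-equivariante; además $f$ y $f'$ son morfismos de $\dga$-módulos, de grado $0$ y que conmutan con $\partial$, propiedades que se preservan al tomar productos tensoriales, sumas directas y cocientes por la acción de un grupo. Con esto $f\circ f'$ queda establecido como morfismo de $\mathbb{S}$-módulos de $M\circ M'$ a $N\circ N'$.
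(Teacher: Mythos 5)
Tu demostración es correcta y coincide en espíritu con lo que el artículo pretende: la proposición se enuncia allí sin prueba (se marca como inmediata con \qed), y tu verificación —descomponer por $h$, comprobar que $(f')^{\otimes h}$ es morfismo de $\mathbb{S}$-módulos, que $f(h)\otimes (f')^{\otimes h}$ desciende al cociente $\otimes_{\Sigma_h}$ por la relación de balanceo entre la acción derecha sobre $M(h)$ y la permutación de factores, y que la equivarianza por $\Sigma_n$ y la compatibilidad con los diferenciales son automáticas— es exactamente el argumento estándar que el autor deja al lector. Identificas correctamente el único punto no trivial (la buena definición sobre $\otimes_{\Sigma_h}$ y la coincidencia de los signos de Koszul al ser $f'$ de grado $0$), así que no hay nada que objetar.
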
\qed

	Los $\mathbb{S}$-módulos pueden ser identificados con endofuntores de 
	la categoría $\dga$-Mod de tal manera que la composición de $\dga$-módulos
	coincida con la composición de funtores (ver \cite{loday2012algebraic}, \S5).
	Este tipo de funtores se llaman funtores de Schur.
\index{shurfunctor@Schur functor}
	La siguiente proposición es una consecuencias de esta identificación.

\begin{proposition}
\label{prop-symmetric-seq-monoidal-cat}
	La categoría de $\mathbb{S}$-módulos con la composición $\circ$
 	y  $I=(0,\field,0,\ldots)$ es una categoría monoidal.
\end{proposition}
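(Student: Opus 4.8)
The plan is to avoid verifying the monoidal axioms by hand and instead to transport them from a strict monoidal category along the Schur functor identification recalled just before the statement. To each $\mathbb{S}$-module $M$ one associates the Schur endofunctor $S_M$ of $\dgacat$ given on objects by $S_M(V)=\bigoplus_{n\geq 0} M(n)\otimes_{\Sigma_n} V^{\otimes n}$, and this assignment extends to a fully faithful functor $S\colon \mathbb{S}\text{-Mod}\to \mathrm{End}(\dgacat)$ onto the subcategory of Schur (analytic) functors. The crucial input, which is precisely the identification cited from \cite{loday2012algebraic}, is that $S$ carries the composition $\circ$ of Definition \ref{df-composition-symmetric-seq} to composition of functors, i.e.\ there are natural isomorphisms $S_{M\circ N}\cong S_M\circ S_N$, and that it carries the unit $I=(0,\field,0,\ldots)$ to the identity functor, since $S_I(V)=\field\otimes_{\Sigma_1}V^{\otimes 1}\cong V$.

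Granting this, the argument is formal. The category $\mathrm{End}(\dgacat)$ with composition of functors and the identity functor $\mathrm{Id}$ is a \emph{strict} monoidal category: composition is associative and unital on the nose, so its associator and unitors are identities and its pentagon and triangle diagrams commute trivially. First I would use the comparison isomorphisms to define, for $\mathbb{S}$-modules $M,N,P$, the candidate associator as the composite
\[
S_{(M\circ N)\circ P}\ \cong\ (S_M\circ S_N)\circ S_P\ =\ S_M\circ(S_N\circ S_P)\ \cong\ S_{M\circ(N\circ P)},
\]
and the candidate unitors from $S_{I\circ M}\cong \mathrm{Id}\circ S_M=S_M$ and $S_{M\circ I}\cong S_M\circ\mathrm{Id}=S_M$. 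Because $S$ is fully faithful, each of these natural isomorphisms of Schur functors comes from a unique isomorphism of $\mathbb{S}$-modules $\alpha_{M,N,P}\colon (M\circ N)\circ P\xrightarrow{\ \cong\ } M\circ(N\circ P)$, $\lambda_M\colon I\circ M\xrightarrow{\ \cong\ } M$ and $\rho_M\colon M\circ I\xrightarrow{\ \cong\ } M$, and naturality transports along the same principle.

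It then remains to check the pentagon and triangle coherence axioms in $\mathbb{S}\text{-Mod}$, and here full faithfulness does all the work. Applying $S$ to either side of the pentagon identity for $\alpha$ produces two natural transformations of Schur functors that, once the comparison isomorphisms are inserted, both reduce to the (trivial) pentagon in the strict category $\mathrm{End}(\dgacat)$; hence they agree, and since $S$ is faithful the original diagram in $\mathbb{S}\text{-Mod}$ commutes. The triangle axiom relating $\alpha$, $\lambda$ and $\rho$ is handled verbatim.

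I expect the genuine content, and thus the main obstacle, to lie entirely in the identification $S_{M\circ N}\cong S_M\circ S_N$ together with the full faithfulness of $S$, rather than in any coherence bookkeeping. Unwinding $S_M\circ S_N$ requires commuting the coinvariants $(-)\otimes_{\Sigma_h}(-)$ past the iterated tensor powers and reindexing the resulting sums over the shuffle cosets $Sh(i_1,\ldots,i_h)$, which is exactly the computation already rehearsed in the remark preceding Definition \ref{df-composition-symmetric-seq} and in the associativity proof of Proposition \ref{prop-s-mod-properties}; one must also take care that over a field of positive characteristic the relevant $\Sigma_n$-coinvariants still behave well enough for $S$ to remain fully faithful. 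Since the paper takes this identification as given (\cite{loday2012algebraic}, \S5), the monoidal structure follows at once.
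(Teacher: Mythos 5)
Your strategy is the one the paper itself gestures at: the paper offers no written proof of Proposition \ref{prop-symmetric-seq-monoidal-cat} beyond the remark that it is ``una consecuencia'' of the identification of $\mathbb{S}$-módulos with Schur functors (\cite{loday2012algebraic}, \S 5), so transporting the strict monoidal structure of $\text{End}(\dgacat)$ back along $S$ is in the intended spirit, and your reduction of the pentagon and triangle axioms to faithfulness of $S$ is correct as far as it goes.

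The genuine gap is the point you yourself flag and then set aside: the argument needs $S$ to be \emph{full} (at least on isomorphisms) in order to pull the associator and unitors back from $\text{End}(\dgacat)$ to $\mathbb{S}$-Mod, and fullness is not available over the ground fields the paper allows. The preliminaries explicitly permit $\field=\mathbb{Z}/p\mathbb{Z}$; over a finite field (and more generally in positive characteristic) the Schur functor $M\mapsto S_M$ is faithful --- one recovers $M(n)$ as the natural retract $M(n)\otimes_{\Sigma_n}\field[\Sigma_n]$ inside $S_M(\field^n)$ --- but it is not full, so a natural isomorphism $S_{(M\circ N)\circ P}\cong S_{M\circ (N\circ P)}$ does not by itself descend to a morphism of $\mathbb{S}$-módulos. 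The fix is to invert the logic: construct $\alpha$, $\lambda$, $\rho$ directly by the explicit reindexing of coset summands, exactly as in the proof of Proposition \ref{prop-s-mod-properties} for $\otimes$ (the computation for $\circ$ in Definition \ref{df-composition-symmetric-seq} is the same bookkeeping with one extra layer of $\otimes_{\Sigma_h}$-coinvariants), verify that their images under $S$ are the canonical identifications, and only then invoke faithfulness of $S$ to deduce the coherence diagrams. With that substitution your argument closes; without it, the key isomorphisms of the monoidal structure have not actually been produced in $\mathbb{S}$-Mod.
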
\qed

De hecho, los operads son instancias de monoides sobre la categoría de $\mathbb{S}$-módulos.

\begin{proposition}
\label{prop-operads-monoids}
	Todo operad determina un monoide en $\mathbb{S}$-Mod y viceversa.
\end{proposition}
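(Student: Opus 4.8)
The plan is to exhibit a dictionary between the data of an operad in the sense of Definition \ref{df-operad} and the data of a monoid $(M, \mu, \eta)$ in the monoidal category $(\mathbb{S}\text{-Mod}, \circ, I)$ provided by Proposition \ref{prop-symmetric-seq-monoidal-cat}, and then to check that the axioms on each side correspond under this dictionary. Recall from Definition \ref{df-composition-symmetric-seq} that
\[
(M \circ M)(n) = \bigoplus_{h \geq 0} M(h) \otimes_{\Sigma_h} \Big( \bigoplus_{i_1 + \cdots + i_h = n} M(i_1) \otimes \cdots \otimes M(i_h) \otimes \field[Sh(i_1,\ldots,i_h)] \Big),
\]
so that a morphism $\mu \colon M \circ M \to M$ of $\mathbb{S}$-modules amounts exactly to a family of $\dga$-morphisms, one out of each summand, into $M(n)$. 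Since the unit object $I = (0, \field, 0, \ldots)$ is concentrated in arity $1$, a morphism $\eta \colon I \to M$ is precisely a $\dga$-morphism $\field \to M(1)$, which is exactly the operad unit.

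First I would treat the passage from an operad $\mathcal{P}$ to a monoid. Taking $M = U(\mathcal{P})$ to be its underlying $\mathbb{S}$-module (Definition \ref{df-forget-functor-seq}), I assemble the composition maps $\gamma_{(h,i_1,\ldots,i_h)} \colon M(h) \otimes M(i_1) \otimes \cdots \otimes M(i_h) \to M(n)$ into a single $\mu$. The crucial observation is that each $\gamma$ factors through the corresponding summand of $(M \circ M)(n)$: the equivariance diagram \ref{dg-equiv-1-op} says precisely that $\gamma$ is compatible with the two $\Sigma_h$-actions, so it descends along $\otimes_{\Sigma_h}$; and the equivariance diagram \ref{dg-equiv-2-op}, together with the description of induction from the Young subgroup $\Sigma_{i_1} \times \cdots \times \Sigma_{i_h}$ to $\Sigma_n$ by the shuffle set used to derive \ref{fm-equi-ope-sigmak}, permits the extension of $\gamma$ to the factor $\field[Sh(i_1,\ldots,i_h)]$ in a $\Sigma_n$-equivariant fashion. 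Once $\mu$ is well defined, the associativity square \ref{dg-associ-operad} for $\gamma$ is transcribed, by means of the associator of Propositions \ref{prop-s-mod-properties} and \ref{prop-symmetric-seq-monoidal-cat}, into the associativity law for $\mu$ on $(M \circ M) \circ M \cong M \circ (M \circ M)$, and the two diagrams of \ref{dg-unit-operad} become the left and right unit laws for $\eta$ under the identifications $I \circ M \cong M \cong M \circ I$.

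Conversely, given a monoid $(M, \mu, \eta)$, I would recover an operad by setting $\mathcal{P}(n) = M(n)$ with the $\Sigma_n$-action from the $\mathbb{S}$-module structure, defining each $\gamma_{(h,i_1,\ldots,i_h)}$ as the composite of the canonical map $M(h) \otimes M(i_1) \otimes \cdots \otimes M(i_h) \to (M \circ M)(n)$ with $\mu$, and reading the above translations backwards to obtain the axioms of Definition \ref{df-operad}. I expect the main obstacle to lie in the careful bookkeeping of the symmetric group actions: namely, in verifying that the coinvariants $\otimes_{\Sigma_h}$ and the induced factor $\field[Sh(i_1,\ldots,i_h)]$ in the definition of $\circ$ encode exactly the two equivariance axioms \ref{dg-equiv-1-op} and \ref{dg-equiv-2-op}, with the Koszul signs tracked consistently on both sides. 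The remaining matchings of associativity and unitality across the dictionary are routine diagram chases, which I would leave to the reader.
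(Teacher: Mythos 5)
Your proof is correct and follows essentially the same route as the paper's: you identify $\eta\colon I\to M$ with the operad unit (since $I$ is concentrated in arity $1$) and decompose $\mu\colon M\circ M\to M$ summand by summand into the maps $\gamma$, with the two equivariance axioms \ref{dg-equiv-1-op} and \ref{dg-equiv-2-op} accounting exactly for the coinvariants $\otimes_{\Sigma_h}$ and the induced factor $\field[Sh(i_1,\ldots,i_h)]$. You in fact go a bit further than the paper, which stops at the correspondence of data and leaves the matching of the associativity and unit axioms entirely implicit.
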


\begin{proof}
	Observe que una aplicación de $\mathbb{S}$-módulos $\eta:I\to M$ es no cero
	solamente en aridad $1$, entonces determina una aplicación $\eta:\field  \to M(1)$ y viceversa.
	Un morfismo $\mu:M\circ M\to M$ de $\mathbb{S}$-módulos en aridad $n$ está
	dado por un morfismo equivariante de $\dga$-módulos $\mu_n$,
		\begin{align}
		\mu_n:\bigoplus_{h\geq 0} \bigoplus_{i_1+\cdots+i_h=n} M(h)\otimes_{\Sigma_h} \left( 
		M(i_1)\otimes \cdots \otimes M(i_h)\otimes \field[Sh(i_1,\ldots,i_h)] \right) \to M(n)
		\end{align}
	el cual está determinado por la colección de morfismos equivariantes,
		\begin{align}
		\gamma:M(h)\otimes_{\Sigma_h} 
		\left(M(i_1)\otimes \cdots \otimes M(i_h)\otimes \field[Sh(i_1,\ldots,i_h)] \right) \to M(n)
		\end{align}
	y cada morfismo $\gamma$ está caracterizado como un morfismo
		\begin{align}
		\gamma:M(h)\otimes M(i_1)\otimes \cdots \otimes M(i_h) \to M(n)
		\end{align}
	que satisface las condiciones de equivarianza \ref{dg-equiv-1-op} y \ref{dg-equiv-2-op}.
	\end{proof}

Antes de la construcción de operads libres se necesita una operación más de $\mathbb{S}$-módulos.

\begin{definition}
\label{df-symme-seq-direct-sum}
\index{directsumsmodules@Direct sum of $\mathbb{S}$-modules}
	Sean $M$ y $N$ $\mathbb{S}$-módulos. Se define la suma directa de $M$ y $N$ mediante la fórmula,
		\begin{equation}
		\begin{gathered}
		(M\oplus N)(n)=M(n)\oplus N(n)
		\end{gathered}
		\end{equation}
\end{definition}

\begin{theorem}
\label{prop-adjunction-operads}
	El funtor de olvido $U:\mathcal{OP}\to \mathbb{S}$-Mod tiene un funtor adjunto a la izquierda
	$F:\mathbb{S}$-Mod$\to \mathcal{OP}$. Al funtor $F$ se le llama el funtor de operads libre.
\end{theorem}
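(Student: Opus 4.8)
The plan is to reduce the statement to the construction of a \emph{free monoid} in the monoidal category $(\mathbb{S}$-Mod$,\circ,I)$. By Proposición \ref{prop-operads-monoids} an operad is precisely a monoid for the composition product $\circ$, and $U$ is the functor that forgets the monoid structure; hence producing a left adjoint to $U$ amounts to assigning to each $\mathbb{S}$-módulo $M$ the free monoid it generates, together with the universal inclusion of $M$ into the underlying $\mathbb{S}$-módulo. Since $\mathbb{S}$-Mod is cocomplete, the idea is to build this free monoid as a colimit of the ``composiciones formales'' of $M$, following the heuristic already developed around \ref{fm-MM}: the $\mathbb{S}$-módulo $M\circ M$ records one layer of formal composition, and each input of such a composite may itself be the output of another composite, iterated a finite number of times.

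First I would set $\Phi(X)=I\oplus(M\circ X)$ and define $F(M)=\operatorname{colim}_k\Phi^{k}(I)$, where the sequential diagram is generated by the coproduct inclusion $I\hookrightarrow I\oplus M=\Phi(I)$ (note $M\circ I=M$ since $I$ is the unit of Proposición \ref{prop-symmetric-seq-monoidal-cat}). Concretely $\Phi^{k}(I)$ collects all formal composites of depth at most $k$: either a single leaf (the summand $I$), or a generator in some $M(h)$ each of whose $h$ inputs is filled by a composite of depth $<k$. The crucial technical point is that $M\circ(-)$ preserves this sequential colimit, which holds because the finite tensor powers $(-)^{\otimes h}$ occurring in $M\circ(-)=\bigoplus_{h}M(h)\otimes_{\Sigma_h}(-)^{\otimes h}$ commute with filtered colimits in $\dga$-Mod. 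One therefore gets a canonical isomorphism $I\oplus(M\circ F(M))\cong F(M)$, which equips $F(M)$ with a unit $I\to F(M)$ and a map $M\circ F(M)\to F(M)$. Using associativity of $\circ$ I would then define the multiplication $\mu\colon F(M)\circ F(M)\to F(M)$ by induction along the depth filtration, grafting one composite into the input slots of another and re-associating, and I would check the unit and associativity axioms level by level; by Proposición \ref{prop-operads-monoids} this monoid is an operad, which we declare to be $F(M)$, and the summand $M\subset\Phi(I)\subset F(M)$ yields the morphism $\iota\colon M\to UF(M)$.

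Finally I would verify the universal property. Given an operad $P$, with composition $\gamma_P\colon P\circ P\to P$ and unit $\eta_P$, together with an $\mathbb{S}$-módulo morphism $f\colon M\to UP$, I would define $\widetilde f\colon F(M)\to P$ by induction on the filtration: it is $\eta_P$ on the $I$-summand, it is $f$ on $M$, and on the summand $M\circ\Phi^{k}(I)$ of $\Phi^{k+1}(I)$ it is forced to be $\gamma_P\circ(f\circ\widetilde f_k)$. These maps are compatible with the colimit inclusions, so they assemble into a morphism of $\mathbb{S}$-modules; a further induction on depth shows that $\widetilde f$ respects the unit, the $\Sigma_n$-actions and the composition, so it is a morphism of operads with $U\widetilde f\circ\iota=f$. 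The same recursion shows $\widetilde f$ is the \emph{unique} such extension, which gives a natural bijection $\operatorname{Hom}_{\mathcal{OP}}(F(M),P)\cong\operatorname{Hom}_{\mathbb{S}\text{-Mod}}(M,UP)$ and hence the adjunction $F\dashv U$.

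The main obstacle is precisely that $\circ$ is \emph{not} cocontinuous in its right variable: unlike the associative case, the naive formula $\bigoplus_{n}M^{\circ n}$ does \textbf{not} produce the free operad, since it records only the ``balanced'' iterated composites and omits those in which different input slots are filled to different depths. The genuine work is therefore to set up the depth filtration so that every formal composite appears exactly once, to justify that $M\circ(-)$ commutes with the sequential colimit, and to establish associativity of $\mu$ and the well-definedness and uniqueness of $\widetilde f$ by induction along that filtration.
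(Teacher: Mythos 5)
Tu propuesta es correcta y sigue esencialmente el mismo camino que la prueba del artículo: reducir el enunciado a la construcción del monoide libre en $(\mathbb{S}\text{-Mod},\circ,I)$ vía la proposición \ref{prop-operads-monoids}, definir $F(M)$ como el colímite de la sucesión $F(M)_{n+1}=I\oplus(M\circ F(M)_n)$ (tu $\Phi^{k}(I)$ coincide con los $F(M)_k$ del texto) y construir $\mu$, la unidad y la counidad por inducción sobre esa filtración. De hecho, explicitas dos puntos que el artículo deja al lector (que $M\circ(-)$ conmuta con el colímite secuencial y por qué $\bigoplus_n M^{\circ n}$ no funciona), lo cual es un complemento valioso pero no un cambio de estrategia.
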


\begin{proof}
	Sea $M$ un $\mathbb{S}$-módulo. Por la proposición \ref{prop-operads-monoids}
	solo necesitamos exhibir el operad libre como un monoide $(F(M),\mu, \eta)$ en $\mathbb{S}$-Mod.
	Se describirá la construcción de $F(M)$, del producto $\mu$ y el neutro $\eta$, 
	así como la construcción de la unidad 
	y la counidad de la adjunción.
	Las verificaciones de que estos objetos satisfacen las propiedades requeridas no son complicadas y se dejan
	al lector (para más detalles ver $\S 5.4$ en \cite{loday2012algebraic}).

	Primero, se construye inductivamente para cada $n$ un $\mathbb{S}$-módulo $F(M)_n$ de la siguiente manera.
		\begin{align}
		&F(M)_0= I \\
		&F(M)_1= I\oplus M\\
		&F(M)_2=I\oplus (M\circ (I\oplus M))=I\oplus (M\circ F(M)_1)\\
		&F(M)_{n+1}= I\oplus (M\circ F(M)_n)
		\end{align}
	Sea $i_0$ la inclusión de $I$ in $F(M)_1$. Usando la identidad $M=M\circ I$
	y los morfismos $1_{M}\circ i_0:M\circ I\to M\circ F(M)_1$
	obtenemos el morfismo $i_1=1_{I}\oplus (1_{M}\circ i_{0}):F(M)_1\to F(M)_2$.
	Repitiendo este proceso se obtienen los morfismos,
		\begin{align}
		i_n:F(M)_n\to F(M)_{n+1}
		\end{align}
	definidos por inducción con la fórmula $i_{n+1}=1_{I}\oplus (1_{M}\circ i_{n})$.
	Los $\mathbb{S}$-módulos $F(M)_{n}$ codifican todas las posibles $n$ etapas
	de composiciones de los elementos de $M$. Para poder colocar junta toda esta información
	en un solo $\mathbb{S}$-módulo, se toma el colímite sobre el diagrama
	determinado por los morfismos $i_j$.
		\begin{equation}
		\begin{gathered}
		F(M)=\underset{n}{\text{colím\,}} F(M)_n
		\end{gathered}
		\end{equation}
	El diferencial de $F(M)$ es la extensión evidente del diferencial de $M$.

	Ahora necesitamos definir el producto $\mu$ y el neutro $\eta$ para $F(M)$. 
	El neutro está dada por la inclusión $\eta:I\to F(M)$.
	El morfismo $\mu:F(M)\circ F(M)\to F(M)$ está determinado por una colección
	de aplicaciones $\mu_{n,m}:F(M)_n\circ F(M)_m\to F(M)_{n+m}$ definidas por inducción
	sobre $n$, tomando $\mu_{0,m}=1_{F(M)_m}$
	y para $n>0$, $\mu_{n,m}$ se define como  la composición,
		\begin{align*}
		F(M)_n\circ F(M)_m &= (I\oplus M\circ F(M)_{n-1})\circ F(M)_m \\
		&\overset{\cong}{\longrightarrow}  F(M)_m  \oplus (M\circ F(M)_{n-1})\circ F(M)_m\\
		&\overset{\cong}{\longrightarrow}  F(M)_m  \oplus M\circ (F(M)_{n-1}\circ F(M)_m)\\
		&\overset{\underrightarrow{\,\,\,1\oplus 1\circ \mu_{n-1,m}\,\,\,\,}}{} \,F(M)_m\oplus M\circ F(M)_{n+m-1}\\
		&\overset{\underrightarrow{\,\,\,i+i'\,\,\,\,}}{}  F(M)_{n+m}
		\end{align*}
	donde $i$ es la inclusión de $F(M)_m$ en $F(M)_{n+m}$, y $i'$
	es la inclusión de $F(M)_{n+m-1}$ como el segundo factor de $F(M)_{n+m}$.

	Sea $\mathcal{P}$ un operad, la counidad $\epsilon:FU\to 1$ de la adjunción está determinada
	por los morfismos $\epsilon_n:FU(\mathcal{P})_n\to \mathcal{P}$  definidos por inducción de la siguiente manera.
	$\epsilon_0:I\to \mathcal{P}$ está determinado por la unidad $\eta$ de $\mathcal{P}$,
	$\epsilon_1=\eta + 1:I\oplus U\mathcal{P}\to \mathcal{P}$ y
	$\epsilon_{n+1}=\eta+\gamma(1\circ \epsilon_{n}):FU(P)_n=I\oplus (U\mathcal{P}\circ UF(P)_{n})\to \mathcal{P}$.
	Finalmente, para $M\in \mathbb{S}$-Mod, la unidad de la adjunción $\eta:1\to UF$, está determinada por las
	inclusiones en el segundo factor $M\circ F(M)_{n-1}\to F(M)_n$.
\end{proof}

\label{rmk-description-adjuntion-op}
	En resumen, la adjunción de la proposición \ref{prop-adjunction-operads},
	define para cada operad $\mathcal{P}$ y $\mathbb{S}$-module $M$, la biyección natural,
		\begin{equation}
		\begin{gathered}
		\theta: \mathcal{OP}(F(M),\mathcal{P})\to \mathbb{S}\text{-Mod}(M,U(\mathcal{P})).
		\end{gathered}
		\end{equation}
	La unidad y la counidad de la adjunción son denotadas $\eta$ y $\epsilon$, respectivamente.
	Para la unidad $\eta$ tenemos los morfismos,
		\begin{align}
		& \eta:1_{\mathbb{S}\text{-Mod}}\to UF, \text{ y } \\
		& \eta_M:M\to UF(M).
		\end{align}
	Para la counidad $\epsilon$ tenemos,
		\begin{align}
		& \epsilon:FU \to 1_{\mathcal{OP}}, \text{ y } \\
		& \epsilon_\mathcal{P}:FU(\mathcal{P})\to \mathcal{P}.
		\end{align}

\section{Relaciones entre adjunciones}

\begin{definition}
\label{df-NSoperads}
\index{nonsymmetricoperad@Non symmetric operad}
\index{categorynonsymmetricoperads@Non symmetric operads category $n\mathcal{OP}$}
	Un operad no simétrico es definido como un operad simétrico
	pero sin las acciones de los grupos simétricos.
	La categoría de los operads no simétricos es denotada $n\mathcal{OP}$
\end{definition}

\begin{definition}
\label{df-N-sequences}
\index{nmodules@$\mathbb{N}$-module}
\index{categorynmodules@$\mathbb{N}$-modules category $\mathbb{N}$-Mod}
	Un $\mathbb{N}$-módulo es un funtor contravariante del grupoide $\mathfrak{N}$
	con objetos el conjunto $[0]=\emptyset$ y $[n]=\{1,..,n\}$ for $n>0$, y morfismos
	las aplicaciones identidad, a la categoría
	$\dga$-Mod. La categoría de $\mathbb{N}$-módulos se denota $\mathbb{N}$-Mod.
\end{definition}

\begin{definition}
\label{df-sym-seq-forget}
	Sea $G$ el funtor de olvido de $\mathbb{S}$-Mod a $\mathbb{N}$-Mod.
\end{definition}

\begin{theorem}
\label{prop-adj-sym-senq}
\index{forgetfulfunctorgsmodnmod@Forgetful functor $G:\mathbb{S}\text{-Mod}\to \mathbb{N}\text{-Mod}$ }
	El funtor de olvido $G:\mathbb{S}\text{-Mod}\to \mathbb{N}\text{-Mod}$ 
	tiene una adjunta a la izquierda $H:\mathbb{N}\text{-Mod} \to \mathbb{S}\text{-Mod}$, llamada el funtor
	libre de $\mathbb{S}$-módulos.
\index{freesmodulefunctor@Free $\mathbb{S}$-module functor}
\end{theorem}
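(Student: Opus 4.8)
El plan es exhibir $H$ como la inducción libre de una acción de grupo simétrico, realizada aridad por aridad, y reconocer la adjunción buscada como el ensamblaje de las adjunciones clásicas entre inducción y restricción. Concretamente, dado un $\mathbb{N}$-módulo $N$ defino
\[
H(N)(n)=N(n)\otimes \field[\Sigma_n],
\]
con $\field[\Sigma_n]$ concentrado en grado cero y $\Sigma_n$ actuando a la derecha por multiplicación regular sobre el segundo factor; así $H(N)(n)$ es el $\dga$-$\field[\Sigma_n]$-módulo derecho libre generado por $N(n)$. La estructura de $\dga$-módulo (diferencial, aumentación y coaumentación) se hereda del producto tensorial usando la aumentación $\field[\Sigma_n]\to \field$ que envía cada $\sigma$ a $1$, y la coaumentación que envía $1$ a la permutación identidad. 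Sobre un morfismo $f:N\to N'$ pongo $H(f)_n=f_n\otimes 1_{\field[\Sigma_n]}$, con lo cual la funtorialidad de $H$ es inmediata.

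A continuación construiría la unidad y la counidad. La unidad $\eta_N:N\to GH(N)$ en aridad $n$ es la inclusión $x\mapsto x\otimes e$, donde $e$ es la permutación identidad; es un morfismo de $\mathbb{N}$-módulos porque en $\mathbb{N}$-Mod no hay acciones que respetar. La counidad $\epsilon_M:HG(M)\to M$ en aridad $n$ es la evaluación de la acción, $x\otimes\sigma\mapsto x\sigma$, que es $\Sigma_n$-equivariante por la asociatividad de la acción y, por tanto, un morfismo de $\mathbb{S}$-módulos. De forma equivalente, y quizá más directa, establecería la biyección natural
\[
\mathbb{S}\text{-Mod}(H(N),M)\;\cong\;\mathbb{N}\text{-Mod}(N,G(M))
\]
enviando un morfismo equivariante $g$ a la familia $x\mapsto g_n(x\otimes e)$, y en sentido inverso extendiendo un morfismo $f:N\to G(M)$ a $\tilde f_n(x\otimes\sigma)=f_n(x)\sigma$.

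Finalmente verificaría que estas dos asignaciones son mutuamente inversas y naturales en ambas variables, o, de modo equivalente, las identidades triangulares para $\eta$ y $\epsilon$. El punto que requiere algún cuidado —y que considero el principal, aunque rutinario— es comprobar que la extensión $\tilde f_n(x\otimes\sigma)=f_n(x)\sigma$ está bien definida, es $\Sigma_n$-equivariante (lo cual se sigue de $\tilde f_n((x\otimes\sigma)\tau)=f_n(x)\sigma\tau=(\tilde f_n(x\otimes\sigma))\tau$) y respeta el diferencial y la aumentación en el contexto graduado. La naturalidad en $n$ es automática, porque toda la construcción es componente a componente y no existen morfismos no triviales entre objetos distintos de $\mathbb{S}$ ni de $\mathfrak{N}$, de modo que no aparecen condiciones de compatibilidad adicionales entre aridades.
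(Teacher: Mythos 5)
Tu propuesta es correcta y sigue esencialmente el mismo camino que el artículo: definir $H(N)(n)=N(n)\otimes \field[\Sigma_n]$ con la acción regular a la derecha y verificar la adjunción mediante la unidad $x\mapsto x\otimes e$ y la extensión $\tilde f_n(x\otimes\sigma)=f_n(x)\sigma$. El artículo deja estas verificaciones al lector por ser directas; tú las explicitas, pero la construcción es idéntica.
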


\begin{proof}
	Para $M\in \mathbb{N}\text{-Mod}$, $H(M)$ es definido como el $\mathbb{S}$ módulo
	con componentes dadas por $M(n)\otimes \field[\Sigma_n]$, para cada $n$. La verificación que satisface las
	propiedades es directa.
\end{proof}

\begin{definition}
\label{df-nOp-Seq-forget}
\index{forgetfulfunctornonsymmopenmod@Forgetful functor $nU:n\mathcal{OP}\to \mathbb{N}\text{-Mod}$}
	Sea $\mathfrak{U}$ el funtor de olvido de la categoría $n\mathcal{OP}$ a $\mathbb{N}\text{-Mod}$.
\end{definition}

\begin{theorem}
\label{prop-adjunction-NSoperads}
\index{freenonsymmetricoperadfunctor@Free non symmetric operad functor $nF$}
	El funtor de olvida $nU:n\mathcal{OP}\to \mathbb{N}\text{-Mod}$
	tiene una adjunta a la izquierda $nF:\mathbb{N}\text{-Mod}\to n\mathcal{OP}$,
	llamada el funtor libre de operads no simétricos.
\end{theorem}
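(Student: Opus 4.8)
The plan is to mirror the proof of the symmetric case (Theorem \ref{prop-adjunction-operads}), constructing the free non-symmetric operad functor $nF$ explicitly and then producing the unit and counit of the adjunction. The guiding principle is that a non-symmetric operad is, exactly as in the symmetric case, a monoid in a suitable monoidal category of $\mathbb{N}$-modules, so the entire construction goes through once we strip away the symmetric-group actions and the shuffle modules $\field[Sh(i_1,\ldots,i_h)]$ from the definitions.

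First I would set up the monoidal structure on $\mathbb{N}\text{-Mod}$. Since an $\mathbb{N}$-module carries no symmetric group action, the tensor product of Definition \ref{df-tensor-product-symme-seq} simplifies to $(M\otimes N)(n)=\bigoplus_{i+j=n}M(i)\otimes N(j)$ (no shuffle factor, no coinvariants), and the composition of Definition \ref{df-composition-symmetric-seq} becomes $(M\circ N)=\bigoplus_{h\geq 0}M(h)\otimes N^{\otimes h}$ (now an ordinary tensor power, with no $\otimes_{\Sigma_h}$). One checks, exactly as in Proposition \ref{prop-s-mod-properties} and Proposition \ref{prop-symmetric-seq-monoidal-cat}, that $\circ$ is associative with unit $I=(0,\field,0,\ldots)$, so $(\mathbb{N}\text{-Mod},\circ,I)$ is monoidal. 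The analogue of Proposition \ref{prop-operads-monoids} then holds: a monoid $(\mathcal{P},\mu,\eta)$ in this category is precisely a non-symmetric operad, because in the absence of symmetric actions the equivariance conditions \ref{dg-equiv-1-op} and \ref{dg-equiv-2-op} are vacuous, and a monoid multiplication $\mu:\mathcal{P}\circ\mathcal{P}\to\mathcal{P}$ decomposes exactly into the composition morphisms $\gamma$ satisfying the associativity axiom \ref{dg-associ-operad}.

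With the monoidal framework in place, I would carry out the free construction verbatim from the symmetric proof. Define inductively $nF(M)_0=I$, $nF(M)_1=I\oplus M$, and $nF(M)_{n+1}=I\oplus(M\circ nF(M)_n)$, build the connecting morphisms $i_n:nF(M)_n\to nF(M)_{n+1}$ by $i_{n+1}=1_I\oplus(1_M\circ i_n)$, and set
\begin{equation}
nF(M)=\underset{n}{\text{colím\,}}\,nF(M)_n.
\end{equation}
The unit $\eta:I\to nF(M)$ is the evident inclusion, and the multiplication $\mu:nF(M)\circ nF(M)\to nF(M)$ is assembled from maps $\mu_{n,m}:nF(M)_n\circ nF(M)_m\to nF(M)_{n+m}$ defined by the same induction as before. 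Since $\mathbb{N}\text{-Mod}$ is a diagram category over $\dga$-Mod, it is cocomplete, so the colimit exists and $\circ$ distributes over it in each variable; this is what makes $\mu$ well-defined and associative. The counit $\epsilon_\mathcal{P}:nF\,nU(\mathcal{P})\to\mathcal{P}$ and unit $\eta_M:M\to nU\,nF(M)$ are defined exactly as in the symmetric case, and the triangle identities follow by the same inductive verification.

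The main obstacle, as in the symmetric proof, is not any single deep step but rather the bookkeeping needed to confirm that $\circ$ commutes with the filtered colimit defining $nF(M)$ and that the inductively defined $\mu_{n,m}$ are compatible with the connecting maps $i_n$, so that they glue to a genuine morphism on the colimit satisfying associativity and unitality. I expect this to be strictly \emph{easier} than the symmetric case, since there are no coinvariants $\otimes_{\Sigma_h}$ and no shuffle modules to track through the associativity isomorphisms; the absence of symmetric structure removes precisely the factors that complicate the computation in Proposition \ref{prop-s-mod-properties}. Consequently the cleanest route is to present the argument as a faithful transcription of the proof of Theorem \ref{prop-adjunction-operads}, noting at each stage that every symmetric decoration drops out, and leaving the routine verifications to the reader as was done there.
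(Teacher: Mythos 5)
Tu propuesta coincide esencialmente con la prueba del artículo: ambas definen la suma directa, el producto tensorial y la composición de $\mathbb{N}$-módulos eliminando las acciones de los grupos simétricos y los módulos de intercambios, identifican los operads no simétricos con monoides en esa categoría monoidal y repiten paso a paso la construcción inductiva con colímite del caso simétrico. Es correcta y no introduce ninguna ruta distinta, solo explicita algunas verificaciones que el artículo deja al lector.
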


\begin{proof}
	El caso no simétrico es similar al caso simétrico, pero sin las consideraciones sobre las acciones de $\Sigma_n$.
	Para un $\mathbb{N}$-módulo $N$, con tal de construir un operad no simétrico libre sobre $N$
	necesitamos una suma directa, un producto tensorial y una composición de $\mathbb{N}$-módulos. Estas son
	definidas como,
		\begin{align}
		&(N\oplus E)(n)=N(n)\oplus E(n),\\
		&(N\otimes E)(n)=\bigoplus_{i+j=n} N(i)\otimes E(j) \text{ y }\\
		&(N\circ E)=\bigoplus_{h\geq 0}N(k)\otimes E^{\otimes h}. 
		\end{align}
	Note que,
		\begin{align}
		N\circ E=\bigoplus_{h\geq 0} \bigoplus_{n\geq 0} 
		\bigoplus_{i_1+\cdots+i_h=n}N(h)\otimes E(i_1)\otimes \cdots \otimes E(i_h)
		\end{align}
	Como en el caso simétrico, los operads no simétricos son monoides 
	sobre la categoría monoidal de $\mathbb{N}$-módulos, donde la estructura
	monoidal está dada por la composición. Los pasos
	para la construcción de $nF$ son los mismos que en el caso simétrico.
\end{proof}

\begin{definition}
\label{df-forget-op-nop}
\index{forgetfulfunctoropnonop@Forgetful functor $\mathcal{G}:\mathcal{OP}\to n\mathcal{OP}$}
	Sea $\mathcal{G}$ el funtor de olvido de la categoría de operads simétricos
	$\mathcal{OP}$ a la categoría de operads no simétricos $n\mathcal{OP}$. 
\end{definition}

\begin{theorem}
\label{prop-adj-OP-nOP}
El funtor $\mathcal{G}:\mathcal{OP}\to n\mathcal{OP}$
tiene un funtor adjunto a la izquierda $\mathcal{H}:n\mathcal{OP}\to \mathcal{OP}$.
\end{theorem}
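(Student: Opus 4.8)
El punto de partida es observar que los cuatro funtores de olvido encajan en un cuadrado conmutativo
\[
\mathfrak{U}\circ\mathcal{G}=G\circ U,
\]
pues ambos caminos envían un operad simétrico $\mathcal{P}$ a la colección subyacente $\{\mathcal{P}(n)\}_{n\geq 0}$, vista como $\mathbb{N}$-módulo, olvidando tanto las composiciones como las acciones de los $\Sigma_n$. De los tres lados restantes ya conocemos adjuntos a la izquierda: $F\dashv U$ (teorema \ref{prop-adjunction-operads}), $nF\dashv \mathfrak{U}$ (teorema \ref{prop-adjunction-NSoperads}) y $H\dashv G$ (teorema \ref{prop-adj-sym-senq}), con $H(N)(n)=N(n)\otimes\field[\Sigma_n]$. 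El plan es aprovechar estas relaciones para producir $\mathcal{H}$.

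Propongo el modelo explícito por simetrización: para un operad no simétrico $\mathcal{Q}$ defino el $\mathbb{S}$-módulo subyacente de $\mathcal{H}(\mathcal{Q})$ como $H(\mathfrak{U}\mathcal{Q})$, es decir $\mathcal{H}(\mathcal{Q})(n)=\mathcal{Q}(n)\otimes\field[\Sigma_n]$, con $\Sigma_n$ actuando por multiplicación a la derecha sobre $\field[\Sigma_n]$. La unidad se toma de la unidad de $\mathcal{Q}$. Luego definiría la composición $\gamma$ sobre los elementos $(\alpha\otimes\sigma)\otimes\bigotimes_{j}(\beta_j\otimes\tau_j)$ usando $\gamma_{\mathcal{Q}}$ y el reordenamiento de bloques forzado por los axiomas de equivarianza \ref{dg-equiv-1-op} y \ref{dg-equiv-2-op}: el factor $\sigma\in\Sigma_h$ se traslada a una permutación de bloques $\sigma\langle i_1,\ldots,i_h\rangle$ y a un reíndice de los $\beta_j$, mientras que los $\tau_j$ se ensamblan en la suma $\tau_1\oplus\cdots\oplus\tau_h$; sobre la componente identidad (con $\sigma=e$ y todos los $\tau_j=e$) esta composición se reduce exactamente a $\gamma_{\mathcal{Q}}$.

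La biyección natural $\mathcal{OP}(\mathcal{H}(\mathcal{Q}),\mathcal{P})\cong n\mathcal{OP}(\mathcal{Q},\mathcal{G}(\mathcal{P}))$ la obtendría del mismo mecanismo que sostiene $H\dashv G$: un morfismo de operads simétricos $f\colon\mathcal{H}(\mathcal{Q})\to\mathcal{P}$ es $\Sigma_n$-equivariante, de modo que cada $f_n\colon\mathcal{Q}(n)\otimes\field[\Sigma_n]\to\mathcal{P}(n)$ queda determinado por su restricción a $\mathcal{Q}(n)\otimes\{e\}$; esta restricción define $\bar f_n\colon\mathcal{Q}(n)\to\mathcal{P}(n)$, y como $\gamma$ coincide con $\gamma_{\mathcal{Q}}$ en la componente identidad, la compatibilidad de $f$ con la composición simétrica equivale exactamente a la compatibilidad de $\bar f$ con la composición no simétrica de $\mathcal{G}(\mathcal{P})$. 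Recíprocamente, todo morfismo no simétrico $\bar f$ se extiende de forma única y equivariante, y la naturalidad se hereda de la de $H\dashv G$. Como verificación de consistencia, la unicidad de adjuntos aplicada al cuadrado conmutativo da $\mathcal{H}\circ nF\cong F\circ H$, lo que fija $\mathcal{H}$ sobre los operads libres.

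La parte difícil será construir $\gamma$ de manera coherente: hay que comprobar que está bien definida —independiente de la escritura de los elementos y compatible con las acciones— y que satisface los axiomas de asociatividad \ref{dg-associ-operad}, de unidad \ref{dg-unit-operad} y de equivarianza de la definición \ref{df-operad}. Toda la dificultad reside en la contabilidad de permutaciones —barajas, sumas de bloques y signos de Koszul— al propagar $\sigma$ y los $\tau_j$; salvo ese rastreo, las verificaciones son rutinarias y las dejaría al lector. Alternativamente, la existencia de $\mathcal{H}$ puede garantizarse de forma estructural por el teorema de levantamiento de adjuntos: como $U$ es monádico (a través de la identificación del teorema \ref{prop-operads-monoids}), $\mathbb{S}\text{-Mod}$ es cocompleta y $G$ posee el adjunto $H$, el funtor $\mathcal{G}$ levanta a un adjunto $\mathcal{H}$ siempre que $\mathcal{OP}$ tenga coigualadores reflexivos, lo cual se obtiene porque $\circ$ los preserva en cada variable.
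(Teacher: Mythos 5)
Tu propuesta coincide en esencia con la prueba del artículo: ambas definen $\mathcal{H}(\mathcal{Q})$ componente a componente como $\mathcal{Q}(n)\otimes\field[\Sigma_n]$ (es decir, $\mathcal{P}\otimes\field[\Sigma]$) y obtienen la biyección de adjunción restringiendo a la componente identidad, solo que el artículo deja todas las verificaciones al lector mientras que tú detallas la definición de $\gamma$ y ofreces además la vía alternativa del levantamiento de adjuntos. La construcción es correcta.
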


\begin{proof}
	Para un operad no simétrico $\mathcal{P}$ su operad simétrico asociado está dado por 
	$\mathcal{P}\otimes \field[\Sigma]$.  Las verificaciones no tienen dificultad y se dejan al lector.
\end{proof}

\begin{theorem}
\label{prop-rela-forget-free-functors}
	Las relaciones entre estos funtores de olvido y sus asociados funtores libres, se reúnen en 
	los siguiente diagramas conmutativos.
		\begin{equation}
		\begin{gathered}
		\xymatrix@C=3pc@R=3pc{
		\mathcal{OP} \ar[r]^-{\mathcal{G}} \ar[d]_-{U} & n\mathcal{OP} \ar[d]^-{nU}\\
		\mathbb{S}\text{-Mod} \ar[r]_-{G} & \mathbb{N}\text{-Mod}
		}
		\xymatrix@C=3pc@R=3pc{
		&\mathcal{OP} \ar@{<-}[r]^-{\mathcal{H}} \ar@{<-}[d]_-{F} & n\mathcal{OP} \ar@{<-}[d]^-{nF}\\
		&\mathbb{S}\text{-Mod} \ar@{<-}[r]_-{H} & \mathbb{N}\text{-Mod}
		}
		\end{gathered}	
		\end{equation}
\end{theorem}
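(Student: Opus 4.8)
El plan es tratar por separado los dos cuadrados. Para el cuadrado de los funtores de olvido basta una verificación directa a nivel de objetos: aplicar $\mathcal{G}$ seguido de $nU$ a un operad $\mathcal{P}$ olvida primero las acciones de los grupos simétricos y luego la composición operádica, mientras que aplicar $U$ seguido de $G$ olvida primero la composición y luego las acciones. En ambos caminos el resultado es el mismo $\mathbb{N}$-módulo, a saber la colección subyacente $\{\mathcal{P}(n)\}_{n\geq 0}$ de $\dga$-módulos, despojada de toda acción y de toda composición. Lo mismo ocurre para los morfismos. Por lo tanto se obtiene la igualdad estricta $nU\circ \mathcal{G}=G\circ U$, que establece la conmutatividad del primer diagrama.

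Para el segundo diagrama la estrategia es no construir explícitamente los funtores libres compuestos, sino deducir su coincidencia a partir de la conmutatividad del primer diagrama mediante la unicidad de los adjuntos. Por los teoremas \ref{prop-adjunction-operads}, \ref{prop-adj-sym-senq}, \ref{prop-adjunction-NSoperads} y \ref{prop-adj-OP-nOP}, cada uno de los funtores $U$, $G$, $nU$ y $\mathcal{G}$ posee un adjunto a la izquierda, dado respectivamente por $F$, $H$, $nF$ y $\mathcal{H}$. Recordando que el adjunto a la izquierda de una composición de funtores es la composición de los adjuntos a la izquierda en el orden inverso, el adjunto a la izquierda de $nU\circ \mathcal{G}$ es $\mathcal{H}\circ nF$, y el de $G\circ U$ es $F\circ H$. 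Como el primer diagrama nos da $nU\circ \mathcal{G}=G\circ U$, tanto $\mathcal{H}\circ nF$ como $F\circ H$ son adjuntos a la izquierda de uno y el mismo funtor $\mathcal{OP}\to \mathbb{N}$-Mod, de modo que, por la unicidad del adjunto a la izquierda salvo isomorfismo natural, se concluye $\mathcal{H}\circ nF\cong F\circ H$, que es precisamente la conmutatividad buscada.

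El principal obstáculo es de índole conceptual más que técnica: hay que fijar con cuidado el orden en que se componen los adjuntos y verificar que el cuadrado de olvido conmuta en sentido estricto. Si se desea conmutatividad estricta en el segundo diagrama, y no solo salvo isomorfismo natural, entonces sí se deben invocar las construcciones explícitas, comprobando que el funtor $H$, dado por $H(M)(n)=M(n)\otimes \field[\Sigma_n]$, es monoidal fuerte respecto de la composición $\circ$ de módulos, esto es $H(M\circ N)\cong H(M)\circ H(N)$ y $H(I)\cong I$, y que preserva los colímites empleados en la construcción inductiva de los operads libres del teorema \ref{prop-adjunction-operads}. En ese caso $H$ envía el monoide libre $nF(M)$ al monoide libre $F(H(M))$, el cual se identifica con $\mathcal{H}(nF(M))$ porque $\mathcal{H}$ opera sobre monoides exactamente como $H$ sobre los módulos subyacentes. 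Para la versión salvo isomorfismo, sin embargo, el argumento de unicidad de adjuntos basta y prescinde por completo de estos cálculos.
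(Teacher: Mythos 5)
Tu propuesta es correcta y sigue esencialmente el mismo camino que el artículo: la conmutatividad estricta del cuadrado de funtores de olvido se verifica directamente, y la del cuadrado de funtores libres se deduce de que la composición de adjuntos a la izquierda es el adjunto a la izquierda de la composición, junto con la unicidad del adjunto salvo isomorfismo natural. Tu versión es más detallada que la del artículo (que condensa todo en una frase) y añade una observación pertinente sobre la diferencia entre conmutatividad estricta y salvo isomorfismo, pero el argumento central es el mismo.
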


\begin{proof}
La conmutatividad es inmediata por el hecho de que la composición de funtores adjuntos es de nuevo una adjunción, y
por la unicidad de la adjunción, sus imágenes son isomorfas.
\end{proof}

\label{remark-noperads}
	El diagrama conmutativo de la proposición \ref{prop-rela-forget-free-functors}
	sugiere que se puede usar la construcción de operads no simétricos
	para describir los operads libres sobre secuencias simétricas en las cuales las acciones
	de los grupos simétricos son libres, es decir,  para un $\mathbb{S}$-módulo $M$
	el operad libre $F(M)$ podría ser interpretado por medio del operad $(\mathcal{H}\circ nF\circ G)(M)$.
	Lo cual corresponde al caso donde los $\mathbb{S}$-módulos tiene como 
	componentes bar resoluciones $\Sigma_n$-libres. Este tipo de $\mathbb{S}$-módulos 
	son usados para describir una $E_\infty$-coalgebra estructura  en los complejos de cadenas asociados 
	a conjuntos simpliciales (ver \cite{sanchez-jesus-thesis-2016}).

\begin{definition}
\label{def-coequalizador-de-s-modulos}
Sea $\Psi:\mathbb{S}\text{-Mod}\to \mathbb{S}\text{-Mod}$ el funtor que convierte las acciones
por los grupos simétricos de un $\mathbb{S}$-módulo en triviales, es decir,
		\begin{equation}
		\begin{gathered}
\Psi(M)(n)=\underset{\sigma \in \Sigma_n}{\text{coeq}} \left( M(n) \underset{\longrightarrow}{\sigma} M(n)\right)
		\end{gathered}	
		\end{equation}

\end{definition}

\begin{example}Se tiene la siguiente relación entre los operads 
de los ejemplos \ref{N-operad} y \ref{M-operad}: 

		\begin{equation}
		\begin{gathered}
\Psi (U(\mathcal{M}))=U(\mathcal{N}).
		\end{gathered}	
		\end{equation}

\end{example}

\begin{corollary}
\label{cor-isomorfismo-del-funtor-libre}
En la subcategoría de $\mathbb{S}$-módulos donde la acción de los grupos simétricos es libre,
la restricción del funtor de operads libres $F$  es isomorfo al funtor $\mathcal{H}\circ nF\circ G \circ \Psi$.
\end{corollary}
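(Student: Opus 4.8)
El plan es deducir el resultado de la conmutatividad de adjuntos establecida en la proposición \ref{prop-rela-forget-free-functors}, combinada con un análisis de cómo $\Psi$ reconstruye un $\mathbb{S}$-módulo con acción libre a partir de sus generadores. El segundo diagrama de \ref{prop-rela-forget-free-functors} proporciona un isomorfismo natural
\begin{equation}
F\circ H \cong \mathcal{H}\circ nF .
\end{equation}
Por lo tanto, escribiendo $\mathcal{H}\circ nF\circ G\circ \Psi \cong F\circ H\circ G\circ \Psi$, el problema se reduce a mostrar que, restringido a la subcategoría de $\mathbb{S}$-módulos con acción libre, el funtor $H\circ G\circ \Psi$ es isomorfo a la identidad. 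En efecto, dado esto se concluiría de inmediato
\begin{equation}
\mathcal{H}\circ nF\circ G\circ \Psi \cong F\circ (H\circ G\circ \Psi) \cong F .
\end{equation}

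Primero establecería el isomorfismo a nivel de objetos. Sea $M$ un $\mathbb{S}$-módulo con acción libre, de modo que cada componente $M(n)$ es un $\field[\Sigma_n]$-módulo libre y admite una descripción $M(n)\cong V_n\otimes \field[\Sigma_n]$ con $V_n$ un espacio de generadores. El funtor $\Psi$ de la definición \ref{def-coequalizador-de-s-modulos} identifica $m$ con $m\sigma$ para todo $\sigma$, es decir, toma coinvariantes, y usando $(\field[\Sigma_n])_{\Sigma_n}\cong \field$ se obtiene $\Psi(M)(n)\cong V_n$ con acción trivial. El funtor $G$ lee esto como un $\mathbb{N}$-módulo y $H$, descrito en \ref{prop-adj-sym-senq} mediante $H(N)(n)=N(n)\otimes \field[\Sigma_n]$, reinduce la acción, devolviendo $H(G(\Psi(M)))(n)\cong V_n\otimes \field[\Sigma_n]\cong M(n)$. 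Esto exhibe el isomorfismo buscado componente a componente y, al ser cada paso compatible con el diferencial, se trata de un isomorfismo de $\mathbb{S}$-módulos.

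El punto delicado, y el principal obstáculo de la prueba, es la naturalidad de este isomorfismo $M\cong H(G(\Psi(M)))$. La identificación $M(n)\cong V_n\otimes \field[\Sigma_n]$ depende de una elección de generadores, y un morfismo equivariante arbitrario no respeta dicha descomposición, de modo que la transformación inducida por los generadores no es natural respecto a todos los morfismos de $\mathbb{S}$-módulos. Para salvar esto procedería de una de dos maneras: o bien restringir la verificación de la naturalidad a los morfismos relevantes en las aplicaciones, a saber, los provenientes de morfismos de $\mathbb{N}$-módulos vía $H$, que son los que aparecen en las bar resoluciones $\Sigma_n$-libres mencionadas tras \ref{prop-rela-forget-free-functors}; o bien comparar directamente las componentes de $F(M)$ y de $\mathcal{H}(nF(G(\Psi(M))))$ mediante su descripción por árboles, verificando que el isomorfismo se vuelve natural una vez aplicado el funtor libre de operads $F$. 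Hecho esto, la combinación con el isomorfismo $F\circ H\cong \mathcal{H}\circ nF$ produce el enunciado del corolario.
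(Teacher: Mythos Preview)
Your reduction via $F\circ H\cong \mathcal{H}\circ nF$ from the second square of Proposition~\ref{prop-rela-forget-free-functors}, followed by the verification that $H\circ G\circ\Psi$ recovers $M$ when the action is free, is precisely the argument the paper intends; the paper itself writes only ``la prueba es inmediata y se deja al lector'' and gives no further detail, so your outline is already more explicit than the original.

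The naturality obstruction you flag is real and is a point the paper glosses over. The isomorphism $M(n)\cong M(n)_{\Sigma_n}\otimes\field[\Sigma_n]$ for a free $\field[\Sigma_n]$-module depends on a choice of generators, and an arbitrary equivariant map need not respect it: for instance, the $\Sigma_2$-equivariant endomorphism of $\field[\Sigma_2]$ sending the identity to $e+\tau$ induces multiplication by~$2$ on coinvariants, which cannot be intertwined with the original map by any equivariant automorphism. Hence $H\circ G\circ\Psi$ is isomorphic to the identity only objectwise, or naturally only on the image of $H$. Your first proposed fix, restricting to morphisms in the image of $H$, is exactly the situation of the $\Sigma_n$-free bar resolutions alluded to after Proposition~\ref{prop-rela-forget-free-functors} and in \cite{sanchez-jesus-thesis-2016}, and is the honest reading of the corollary; your second route through the tree description would also work but is more laborious than the statement warrants.
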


\begin{proof}
La prueba es inmediata y se deja al lector.
\end{proof}

La caracterización dada por el corolario \ref{cor-isomorfismo-del-funtor-libre} es útil
a la hora de imaginar operads libres, debido a que en el caso no simétrico los operads
libres pueden ser visualizados fácilmente por árboles planos (ver \cite{markl2007operads}).

\bibliographystyle{amsplain}
\bibliography{principal-bibliography}
\end{document}